\newtheorem{thm}{Theorem}[section]
\newtheorem{lem}[thm]{Lemma}
\theoremstyle{definition}
\newtheorem{definition}[thm]{Definition}
\begin{document}

\begin{frontmatter}



\title{On the validity of the local Fourier analysis} 


\author[UniZar]{Carmen Rodrigo\corref{cor1}}\ead{carmenr@unizar.es}
\address[UniZar]{Departamento de Matem\'{a}tica Aplicada, IUMA,
Universidad de Zaragoza, Zaragoza, Spain}
\author[CWI]{Francisco J. Gaspar}\ead{F.J.Gaspar@cwi.nl}
\address[CWI]{CWI, Centrum Wiskunde \& Informatica, Science Park 123, 1090 Amsterdam, The Netherlands}
\author[PennState,BAS]{Ludmil T. Zikatanov}\ead{ludmil@psu.edu}
\address[PennState]{Department of Mathematics, The Pennsylvania State University, 
  University Park, Pennsylvania, 16802, USA}
\address[BAS]{Institute for Mathematics and 
  Informatics, Bulgarian Academy of Sciences, Sofia, Bulgaria}
\cortext[cor1]{Corresponding author. Tel.: +34 976762148; E-mail address: carmenr@unizar.es (Carmen Rodrigo)}

\begin{abstract}
  Local Fourier analysis (LFA) is a useful tool in predicting the
  convergence factors of geometric multigrid methods (GMG). As is well
  known, on rectangular domains with periodic boundary conditions this
  analysis gives the \emph{exact} convergence factors of
  such methods. In this work, using the Fourier method, we extend
  these results by proving that such analysis yields the exact
  convergence factors for a wider class of problems.
\end{abstract}

\begin{keyword}
Local Fourier analysis \sep multigrid \sep Fourier method



\end{keyword}

\end{frontmatter}


\section{Introduction}
\label{sec:intro}
The local Fourier analysis (LFA) introduced by A. Brandt \cite{Bra77},
is a tool which provides realistic quantitative estimates of the
asymptotic convergence factors of the GMG algorithms.  For
discretizations of partial differential equations, the traditional LFA
is based on a discrete Fourier transform and is accurate if the
influence of the boundary conditions is negligible.  In fact, it is
well known (see \cite{Stu1982, Brandt1994}), that for model
problems on rectangular domains and with periodic boundary
conditions this analysis gives the exact convergence rate of GMG.

In this work we focus on the question whether the LFA can be made
rigorous for a wider class of problems with boundary
conditions that are not necessarily periodic. We answer to this
question positively.  Our approach relies on the embedding of the
model problem into a periodic problem. Similar ideas have also been
explored in works on circulant preconditioners for elliptic
problems~\cite{Lirkov1994, Lirkov1997} and also for preconditioning
the indefinite Helmholtz equation~\cite{ludmil}. We introduce a class
of operators called LFA-compatible operators here and prove that for
such operators the LFA gives the exact multigrid convergence
factors. Our studies include the Dirichlet, the Neumann and the mixed
boundary condition problem for a constant coefficient,
reaction-diffusion equation on a $d$-dimensional tensor product grid.

\section{Preliminaries}

\subsection{The Dirichlet problem and its discretization}
We consider a reaction-diffusion problem in $d$ spatial dimensions on
the domain $\Omega^D =(0,1)^d$,
\begin{equation}\label{dD_model_problem_D}
-\Delta u(\bm{x}) + c u(\bm{x}) = f(\bm{x}), \ \ \bm{x} \in \Omega^D, 
\quad\mbox{and}\quad 
u(\bm{x}) = 0, \ \ \bm{x} \in \partial \Omega^D,
\end{equation}
where $c>0$ is a constant. First, let us consider the simplest case
when $d=1$ (one dimensional problem). The computational domain then is
the interval $\Omega^D =(0,1)$ and the corresponding two-point
boundary value problem~\eqref{dD_model_problem_D} is:
\begin{equation}\label{1D_model_problem_D}
-u''(x) + c u(x) = f(x), \ \ x\in \Omega^D,\quad u(0) = u(1) = 0.
\end{equation}
For $d=1$, we introduce a uniform grid
$\Omega_h^D = \{x_k=kh\}_{k = 0}^n$, with step size $h=1/n$,
$n\in \mathbb{N}$ and we discretize this problem by the standard
central difference scheme. As a result, we obtain the linear system of
algebraic equations with tri-diagonal matrix:
\begin{equation}\label{system_eqs_D}
A_h^D \bm{u} = \bm{f} \ \hbox{where} \ 
A_h^D = T_h^D +c I_{n-1}\in \mathbb{R}^{(n-1)\times (n-1)},
\end{equation}
where $\bm{u} = \left(u_1, \ldots,
 u_{n-1}\right)^T$, $\bm{f} = \left(f_1, \ldots, f_{n-1}\right)^T$,
$I_{n-1}\in \mathbb{R}^{(n-1)\times (n-1)}$ is the identity matrix, and
\begin{equation}\label{T_D}
  T_h^D = \frac{1}{h^2}\ \operatorname{diag}(-1,2,-1) 
\in \mathbb{R}^{(n-1)\times (n-1)}.
\end{equation}

This was the simple, but very important, one dimensional case. In the case of 
higher spatial dimensions and on a uniform grid with the same
step size $h=1/n$ in all the directions the linear
systems are written in compact form by using the standard tensor
product $\otimes$ for matrices. We recall the following properties of
the tensor product
\begin{equation}\label{tensor_product}
(X + Y) \otimes Z = (X \otimes Z) + (Y \otimes Z), \quad (X_1 \otimes
X_2)(Y_1 \otimes Y_2) = (X_1Y_1 \otimes X_2 Y_2).
\end{equation}
We further denote the $k$-th tensor power of a matrix $X$ by
$X^{\otimes k} = \underbrace{X \otimes \ldots \otimes X}_{k}$.
Finally, let us note that the generalization to different step sizes in
different directions is straightforward.

With this notation, the standard second order central difference
scheme for discretization of the Dirichlet
problem~\eqref{dD_model_problem_D} results in the linear system 
\begin{equation}\label{system_eqs_dD}
A_h^D \bm{u} = \bm{f}, \quad   A_h^D = \sum_{j=1}^d  \left(I_{n-1}^{\otimes(j-1)} \otimes T_h^D \otimes I_{n-1}^{\otimes(d-j)}\right) + c I_{n-1}^{\otimes d} \in \mathbb{R}^{(n-1)^d\times (n-1)^d}.
\end{equation}

\subsection{A periodic problem}\label{s:periodic} 
We now consider a finite difference discretization on a grid with step size $h = 1/n$ of a periodic
problem on $\Omega^P=(0,2)$:
\begin{equation}\label{system_eqs_P}
A_{h}^P \widetilde{\bm{u}}= \widetilde{\bm{f}}, \quad \hbox{where} \quad
A_{h}^P = T_h^P+ c I_{N}\in \mathbb{R}^{N\times N},
\end{equation}
with $N=2n$ and
$T_{h}^P = \frac{1}{h^2}\ \operatorname{diag}(-1,2,-1) -
\bm{e}_1^N(\bm{e}^N_N)^T - \bm{e}^N_N(\bm{e}^N_1)^T \in \mathbb{R}^{N\times N}$.  Here, we have denoted
$\widetilde{\bm{u}} = \left(\widetilde{u}_1, \ldots,
  \widetilde{u}_{N}\right)^T$, 
$\widetilde{\bm{f}} = \left(\widetilde{f}_1, \ldots,
  \widetilde{f}_{N}\right)^T$, and $\bm{e}_k^m$ is the $k$-th canonical Euclidean basis vector in $\mathbb{R}^{m}$.  Finally, let us point out that by a
periodic problem here we mean the problem~\eqref{1D_model_problem_D} defined on $\Omega^P$
with boundary conditions $u(0)-u(2)=u'(0)-u'(2)=0$. 

The extension to higher dimension $d>1$ is obvious and we have
the linear system $A_h^P \widetilde{\bm{u}} = \widetilde{\bm{f}}$,
with
\begin{equation}\label{system_eqs_dP}
A_h^P = \sum_{j=1}^d \left(I_N^{\otimes(j-1)} \otimes T_h^P \otimes I_N^{\otimes(d-j)}\right)
+ c I_N^{\otimes d} \in \mathbb{R}^{N^d\times N^d}.
\end{equation}

\subsection{Relation between the Dirichlet and the periodic problem}
Our goal now is to describe how the discretized Dirichlet problem
relates to the periodic problem defined in section~\ref{s:periodic}. To begin, we
consider the 1-dimensional case given in~\eqref{system_eqs_D}
and we define the {\it odd extension operator} 
as the linear operator
$E_{o,h}:\ \mathbb{R}^{n-1} \ \rightarrow \ \mathbb{R}^N$, $N=2n$ such
that
\begin{equation}\label{extension_op}
E_{o,h} \bm{e}_i^{n-1} = \bm{e}_i^N - \bm{e}_{N-i}^N, \quad i=1,\ldots,n-1.
\end{equation} 
The \emph{restriction operator} $R_{o,h}$ is defined as
$R_{o,h} = \frac{1}{2} E_{o,h}^T$. 
It is easy to see that the following relations hold in the one dimensional case:
$R_{o,h} E_{o,h}= I_{n-1}$, and 
$E_{o,h} R_{o,h} \bm{u} = \bm{u}$, for all 
$\bm{u}\in\operatorname{range} (E_{o,h})$. 
Notice also that
$\operatorname{range} (E_{o,h}) = \{\bm{u}\in \mathbb{R}^N \ | u_n = u_N
= 0, u_{j} = -u_{N-j}, j=1,\ldots,n-1\}$
and $\widetilde{\bm{f}} = E_{o,h} \bm{f}$.
For $d>1$ the restriction and extensions are 
$R_{o,h}^{\otimes d}$ and $E_{o,h}^{\otimes d}$ and we have:
\begin{equation}\label{property12}
R_{o,h}^{\otimes d}  E_{o,h}^{\otimes d} = I_{n-1}^{\otimes d}, 
\quad\mbox{and}\quad 
E_{o,h}^{\otimes d} R_{o,h}^{\otimes d} \bm{u} = \bm{u}, 
\quad 
\mbox{for all}\quad \bm{u}\in \operatorname{range} (E_{o,h}^{\otimes d}). 
\end{equation}

\subsubsection{LFA-compatibility} \label{s:lfa-compatibility}


We now clarify the relation between the Dirichlet and the periodic
problem. We begin with a very general definition of LFA-compatibility.
\begin{definition}
  Let $R_{o,h}$ and $E_{o,h}$ be operators
  satisfying~\eqref{property12}. We say that the pair of operators
  $(M_h^D,M_h^P)$ is an
  \emph{LFA-compatible} pair if and only if $M_h^D=R_{o,h} M_h^P E_{o,h}$ and
  $M_h^P \bm{v} \in \operatorname{range} (E_{o,h})$ for
  all $\bm{v} \in \operatorname{range} (E_{o,h})$.
\end{definition}
The LFA-compatibility is, in some sense, the minimal requirement which
allows for building relations between solutions to a periodic and
the corresponding Dirichlet problems, or the iterates constructed in an iterative method
for these problems. In a more abstract setting, the operators $M_h^P$
and $M_h^D$ do not have to be a periodic or a Dirichlet problem, they only
need to be connected via a compatibility relation based on operators
$E_{o,h}$ and $R_{o,h}$ satisfying the relations  
in~\eqref{property12}. In the following, however, we only use 
$E_{o,h}$ and $R_{o,h}$ as defined above.

Now we prove several results, which follow directly from the
definition of LFA-compatibility.
\begin{lem}\label{Th_relation}
Let $A_h^D$ and $A_h^P$ be the coefficient matrices related to the Dirichlet and periodic problems. Then, $(A_h^D,A_h^P)$ is an LFA-compatible pair.
\end{lem}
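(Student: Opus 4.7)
The strategy is to first reduce the statement to the one-dimensional case $d=1$ and then verify it directly by an explicit computation on the standard basis. The reduction to $d=1$ uses the tensor product identity $(X_1\otimes X_2)(Y_1\otimes Y_2)=(X_1Y_1)\otimes(X_2Y_2)$ from~\eqref{tensor_product}. Applied factor by factor, this identity gives
\[
R_{o,h}^{\otimes d}\bigl(I_N^{\otimes(j-1)}\otimes T_h^P\otimes I_N^{\otimes(d-j)}\bigr)E_{o,h}^{\otimes d}
=I_{n-1}^{\otimes(j-1)}\otimes\bigl(R_{o,h}T_h^P E_{o,h}\bigr)\otimes I_{n-1}^{\otimes(d-j)},
\]
where I used $R_{o,h}^{\otimes k}E_{o,h}^{\otimes k}=I_{n-1}^{\otimes k}$, which follows from~\eqref{property12}. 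Summing over $j$ and applying the same identity to the $c I_N^{\otimes d}$ term reduces the relation $A_h^D=R_{o,h}^{\otimes d}A_h^P E_{o,h}^{\otimes d}$ to the one-dimensional equality $T_h^D=R_{o,h}T_h^P E_{o,h}$ together with $I_{n-1}=R_{o,h}I_N E_{o,h}$ (the latter is just~\eqref{property12}). The invariance statement likewise reduces: it suffices to show that $T_h^P$ maps $\operatorname{range}(E_{o,h})$ to itself, since then each tensor factor in $A_h^P$ preserves $\operatorname{range}(E_{o,h}^{\otimes d})$ on pure tensors, and linearity extends the conclusion to the whole subspace.

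For the one-dimensional invariance I would start from the characterization $\operatorname{range}(E_{o,h})=\{\bm{v}\in\mathbb{R}^N\mid v_n=v_N=0,\ v_j=-v_{N-j}\}$ and compute componentwise. Using the definition of $T_h^P$ in~\eqref{system_eqs_P}, the wrap-around entries give
\[
(T_h^P\bm{v})_1=\tfrac{1}{h^2}(2v_1-v_2-v_N),\qquad
(T_h^P\bm{v})_N=\tfrac{1}{h^2}(2v_N-v_{N-1}-v_1),
\]
and the standard three-point stencil for the interior indices. A short check shows $(T_h^P\bm{v})_n=0$, $(T_h^P\bm{v})_N=0$, and $(T_h^P\bm{v})_{N-k}=-(T_h^P\bm{v})_k$ for $1\le k\le n-1$, which is exactly the membership in $\operatorname{range}(E_{o,h})$.

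For the one-dimensional identity $T_h^D=R_{o,h}T_h^P E_{o,h}$ I would apply $T_h^P$ to $\bm{v}=E_{o,h}\bm{u}$ and observe that for $1\le k\le n-1$ none of the stencil neighbours reach across the periodic boundary, so $(T_h^P E_{o,h}\bm{u})_k$ coincides with the tri-diagonal action of $T_h^D$ in~\eqref{T_D}. Since $T_h^P E_{o,h}\bm{u}\in\operatorname{range}(E_{o,h})$ by the previous step, $R_{o,h}=\tfrac12 E_{o,h}^T$ simply reads off the first $n-1$ components, so $R_{o,h}T_h^P E_{o,h}\bm{u}=T_h^D\bm{u}$.

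The one concrete obstacle is the boundary bookkeeping in that componentwise verification: one must be careful with the two wrap-around rows of $T_h^P$ and with the symmetry constraint $v_j=-v_{N-j}$, especially at the pivot indices $k=n$ and $k=N$ where odd symmetry forces the value to vanish. Once the 1D verification is in hand, the tensor product machinery assembled above yields the general statement without further effort.
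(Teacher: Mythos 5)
Your proposal is correct and follows essentially the same route as the paper: reduce to $d=1$ via the tensor-product identity and $R_{o,h}E_{o,h}=I_{n-1}$, then verify $R_{o,h}T_h^P E_{o,h}=T_h^D$ and the invariance of $\operatorname{range}(E_{o,h})$ under $T_h^P$ (you actually supply the componentwise 1D checks that the paper dismisses as ``a straightforward computation''). The only nit is the phrase that for $1\le k\le n-1$ no stencil neighbour crosses the periodic boundary: at $k=1$ the wrap-around entry $v_N$ does enter, but it vanishes on $\operatorname{range}(E_{o,h})$, which is what your earlier explicit formula for $(T_h^P\bm{v})_1$ already shows.
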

\begin{proof}
The standard properties of the tensor product imply that
\begin{eqnarray*}
R_{o,h}^{\otimes d} A_h^P  E_{o,h}^{\otimes d} &=& R_{o,h}^{\otimes d} \left(\sum_{j=1}^d \left(I_N^{\otimes(j-1)} \otimes T_h^P \otimes I_N^{\otimes(d-j)}\right)
+ c I_N^{\otimes d}\right) E_{o,h}^{\otimes d} \\
& = & R_{o,h}^{\otimes d}\left(\sum_{j=1}^d  \left(E_{o,h}^{\otimes(j-1)} \otimes T_h^P E_{o,h} \otimes E_{o,h}^{\otimes(d-j)}\right)
+ c E_{o,h}^{\otimes d}\right) \\
& = & \sum_{j=1}^d  \left(I_{n-1}^{\otimes(j-1)} \otimes R_{o,h}T_h^P E_{o,h} \otimes I_{n-1}^{\otimes(d-j)}\right)
+ c  I_{n-1}^{\otimes d}.
\end{eqnarray*}
Further, taking into account that  $R_{o,h} T_h^P E_{o,h} = T_h^D$, we also have $A_h^D = R_{o,h}^{\otimes d}  A_h^P  E_{o,h}^{\otimes d}$.  
If $\bm{u} \in \operatorname{range} (E_{o,h}^{\otimes d})$, then there exists 
$\bm{v} \in \mathbb{R}^{(n-1)^d}$ such that
$\bm{u} = E_{o,h}^{\otimes d}\bm{v}$ and we have 
\begin{eqnarray*}
A_h^P \bm{u} &=& \left(\sum_{j=1}^d I_N^{\otimes(j-1)} \otimes T_h^P \otimes I_N^{\otimes(d-j)}\right) E_{o,h}^{\otimes d}\bm{v}+ c I_N^{\otimes d}E_{o,h}^{\otimes d}\bm{v} \\
&=&  \left(\sum_{j=1}^d E_{o,h}^{\otimes(j-1)} \otimes T_h^P E_{o,h} \otimes E_{o,h}^{\otimes(d-j)}\right)  \bm{v}+ c E_{o,h}^{\otimes d}\bm{v}.
\end{eqnarray*}
A straightforward computation shows that
$T_h^P \bm{u} \in \operatorname{range}(E_{o,h})$
for any $\bm{u} \in \operatorname{range}(E_{o,h})$, and this completes the proof.
\end{proof}
\begin{lem}\label{th_sol_DP}
If $\bm{u}$ satisfies $A_h^D \bm{u} = \bm{f}$, 
then  $A_h^P (E_{o,h}^{\otimes d} \bm{u}) = E_{o,h}^{\otimes d} \bm{f}$.
\end{lem}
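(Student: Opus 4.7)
The plan is to exploit LFA-compatibility of the pair $(A_h^D, A_h^P)$, which was established in Lemma~\ref{Th_relation}, together with the two relations in~\eqref{property12}. Set $\bm{v} := E_{o,h}^{\otimes d} \bm{u}$. Since $\bm{v}\in\operatorname{range}(E_{o,h}^{\otimes d})$ by construction, the second half of the LFA-compatibility condition guarantees $A_h^P \bm{v} \in \operatorname{range}(E_{o,h}^{\otimes d})$ as well. Hence there is a (unique) vector $\bm{w}\in\mathbb{R}^{(n-1)^d}$ with $A_h^P \bm{v} = E_{o,h}^{\otimes d} \bm{w}$.

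The next step is to identify $\bm{w}$ with $\bm{f}$. Applying $R_{o,h}^{\otimes d}$ to both sides of $A_h^P \bm{v} = E_{o,h}^{\otimes d} \bm{w}$ and using $R_{o,h}^{\otimes d} E_{o,h}^{\otimes d} = I_{n-1}^{\otimes d}$ gives
\begin{equation*}
\bm{w} \;=\; R_{o,h}^{\otimes d} A_h^P \bm{v} \;=\; R_{o,h}^{\otimes d} A_h^P E_{o,h}^{\otimes d}\bm{u} \;=\; A_h^D \bm{u} \;=\; \bm{f},
\end{equation*}
where the third equality uses the first half of LFA-compatibility, $A_h^D = R_{o,h}^{\otimes d} A_h^P E_{o,h}^{\otimes d}$. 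Substituting $\bm{w}=\bm{f}$ back yields $A_h^P(E_{o,h}^{\otimes d}\bm{u}) = E_{o,h}^{\otimes d}\bm{f}$, which is the desired identity.

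There is essentially no hard step here: once LFA-compatibility is in hand, the argument is a short two-line manipulation, and the whole point of the compatibility definition was precisely to make this implication automatic. The only subtlety worth flagging is the invariance of $\operatorname{range}(E_{o,h}^{\otimes d})$ under $A_h^P$, which is what lets us conclude that $A_h^P \bm{v}$ lies in the image of $E_{o,h}^{\otimes d}$ so that $R_{o,h}^{\otimes d}$ acts on it as a genuine left inverse rather than just a one-sided inverse.
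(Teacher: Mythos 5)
Your proof is correct and follows essentially the same route as the paper: both rely on the LFA-compatibility of $(A_h^D,A_h^P)$ from Lemma~\ref{Th_relation} (the identity $A_h^D = R_{o,h}^{\otimes d} A_h^P E_{o,h}^{\otimes d}$ plus invariance of $\operatorname{range}(E_{o,h}^{\otimes d})$) together with~\eqref{property12}. Writing $A_h^P E_{o,h}^{\otimes d}\bm{u} = E_{o,h}^{\otimes d}\bm{w}$ and identifying $\bm{w}=\bm{f}$ is just a reformulation of the paper's step of applying $E_{o,h}^{\otimes d}$ to $R_{o,h}^{\otimes d} A_h^P E_{o,h}^{\otimes d}\bm{u}=\bm{f}$, so no substantive difference.
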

\begin{proof}
Using that $A_h^D = R_{o,h}^{\otimes d}  A_h^P  E_{o,h}^{\otimes d}$, we have that $R_{o,h}^{\otimes d}  A_h^P  E_{o,h}^{\otimes d} \bm{u} = \bm{f}.$ Applying $E_{o,h}^{\otimes d}$ on the left and taking into account that $A_h^P E_{o,h}^{\otimes d} \bm{u} \in \operatorname{range} (E_{o,h}^{\otimes d})$, completes the proof. 
\end{proof}
\begin{thm}\label{inv_invariance_d}
The pair $((A_h^D)^{-1},(A_h^P)^{-1})$ is LFA-compatible.
\end{thm}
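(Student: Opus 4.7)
The plan is to verify the two defining conditions of LFA-compatibility directly for the pair $((A_h^D)^{-1},(A_h^P)^{-1})$, leveraging Lemma~\ref{th_sol_DP} which transfers a Dirichlet solution to a periodic one via $E_{o,h}^{\otimes d}$. First, I would note that both $A_h^D$ and $A_h^P$ are invertible: since $c>0$ and the symmetric tri-diagonal (respectively circulant) Laplacians $T_h^D$ and $T_h^P$ are positive semidefinite, each matrix is symmetric positive definite, so the inverses exist and the claim is well-posed.

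Next I would establish the range-invariance condition, namely $(A_h^P)^{-1}\bm{v}\in\operatorname{range}(E_{o,h}^{\otimes d})$ for every $\bm{v}\in\operatorname{range}(E_{o,h}^{\otimes d})$. Given such a $\bm{v}$, I would write $\bm{v}=E_{o,h}^{\otimes d}\bm{g}$ where $\bm{g}=R_{o,h}^{\otimes d}\bm{v}$ by~\eqref{property12}. Let $\bm{u}=(A_h^D)^{-1}\bm{g}$ be the unique Dirichlet solution to $A_h^D\bm{u}=\bm{g}$. Then Lemma~\ref{th_sol_DP} gives $A_h^P(E_{o,h}^{\otimes d}\bm{u})=E_{o,h}^{\otimes d}\bm{g}=\bm{v}$, whence $(A_h^P)^{-1}\bm{v}=E_{o,h}^{\otimes d}\bm{u}\in\operatorname{range}(E_{o,h}^{\otimes d})$.

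With range-invariance in hand, I would derive the restriction identity $(A_h^D)^{-1}=R_{o,h}^{\otimes d}(A_h^P)^{-1}E_{o,h}^{\otimes d}$ by testing against an arbitrary $\bm{f}\in\mathbb{R}^{(n-1)^d}$. Setting $\bm{u}=(A_h^D)^{-1}\bm{f}$ and applying Lemma~\ref{th_sol_DP} yields $A_h^P(E_{o,h}^{\otimes d}\bm{u})=E_{o,h}^{\otimes d}\bm{f}$, so $(A_h^P)^{-1}E_{o,h}^{\otimes d}\bm{f}=E_{o,h}^{\otimes d}\bm{u}$. Applying $R_{o,h}^{\otimes d}$ and using $R_{o,h}^{\otimes d}E_{o,h}^{\otimes d}=I_{n-1}^{\otimes d}$ from~\eqref{property12} gives $R_{o,h}^{\otimes d}(A_h^P)^{-1}E_{o,h}^{\otimes d}\bm{f}=\bm{u}=(A_h^D)^{-1}\bm{f}$, and since $\bm{f}$ was arbitrary the two matrices coincide.

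I do not expect a serious obstacle here; the only subtle point is invoking invertibility of $A_h^P$, which is why the positivity $c>0$ is essential (for $c=0$ the periodic operator has a one-dimensional kernel of constants and the argument would need to be adapted by working modulo that kernel, or by using a pseudoinverse). Otherwise the proof is a clean two-line application of Lemma~\ref{th_sol_DP} combined with the projection identity~\eqref{property12}.
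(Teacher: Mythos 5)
Your proof is correct and follows essentially the same route as the paper: both parts are obtained by the same two applications of Lemma~\ref{th_sol_DP} together with the projection identity~\eqref{property12}. The only addition is your explicit remark on invertibility via $c>0$, which the paper leaves implicit but which is a reasonable point to record.
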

\begin{proof}
  We consider $\bm{f} \in \operatorname{range}(E_{o,h}^{\otimes d})$. Then, there exists $\bm{g}\in \mathbb{R}^{{(n-1)}^d}$ such that $E_{o,h}^{\otimes d} \bm{g} = \bm{f}$. If $\bm{u} = (A_h^D)^{-1}\bm{g}$, by using Lemma~\ref{th_sol_DP} we have that $E_{o,h}^{\otimes d}\bm{u}=(A_h^P)^{-1}\bm{f}$, which implies that $(A_h^P)^{-1}\bm{f}\in \operatorname{range}(E_{o,h}^{\otimes d})$. 
Next, again from Lemma~\ref{th_sol_DP}, it follows that 
if $\bm{u}=(A_h^D)^{-1}\bm{f}$, then 
  $(A_h^P)^{-1} E_{o,h}^{\otimes d} \bm{f} = E_{o,h}^{\otimes d}\bm{u}$. Hence, 
  $R_{o,h}^{\otimes d}(A_h^P)^{-1} E_{o,h}^{\otimes d} \bm{f} =
  R_{o,h}^{\otimes d} E_{o,h}^{\otimes d} \bm{u} = \bm{u}$ and the 
proof is complete. 
\end{proof}

\section{Linear iterative methods and multigrid}
Let us consider a general stationary iterative method for the Dirichlet and the periodic problems: 
\begin{equation}
\bm{u}^{k+1} = \bm{u}^k + B_h^D(\bm{f}-A_h^D\bm{u}^k), \quad
\widetilde{\bm{u}}^{k+1} = \widetilde{\bm{u}}^k + B_h^P(\widetilde{\bm{f}}-A_h^P\widetilde{\bm{u}}^k),\label{iterative_d}
\end{equation}
where $B_h^{D,P}$ are linear operators (called iterators). 
We have the following theorem which shows that
the LFA-compatibility of the iterators provides a relation between the
iterates. 
\begin{thm}\label{th_iterdD}
  Let $(B_h^D,B_h^P)$ be an LFA-compatible pair and
  $\widetilde{\bm{f}} = E_{o,h}^{\otimes d} \bm{f}$. If
  $\widetilde{\bm{u}}^0 = E_{o,h}^{\otimes d} \bm{u}^0$, then
  $\widetilde{\bm{u}}^{k} = E_{o,h}^{\otimes d} \bm{u}^{k}, \;
  k=1,2,\ldots$
\end{thm}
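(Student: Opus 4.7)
The plan is to prove the statement by induction on $k$, with the base case being the given hypothesis $\widetilde{\bm{u}}^0 = E_{o,h}^{\otimes d}\bm{u}^0$. The inductive step is the whole content, and the key intermediate claim I would establish is that the periodic residual $\widetilde{\bm{r}}^k := \widetilde{\bm{f}} - A_h^P\widetilde{\bm{u}}^k$ is the odd extension of the Dirichlet residual $\bm{r}^k := \bm{f} - A_h^D\bm{u}^k$.

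To get this intermediate identity, I would first use the inductive hypothesis to replace $\widetilde{\bm{u}}^k$ by $E_{o,h}^{\otimes d}\bm{u}^k$, and then invoke the LFA-compatibility of $(A_h^D, A_h^P)$ proved in Lemma~\ref{Th_relation}. Compatibility tells me that $A_h^P E_{o,h}^{\otimes d}\bm{u}^k$ lies in $\operatorname{range}(E_{o,h}^{\otimes d})$, so applying $E_{o,h}^{\otimes d}R_{o,h}^{\otimes d}$ to it acts as the identity by~\eqref{property12}; combined with $A_h^D = R_{o,h}^{\otimes d} A_h^P E_{o,h}^{\otimes d}$, this gives $A_h^P E_{o,h}^{\otimes d}\bm{u}^k = E_{o,h}^{\otimes d} A_h^D\bm{u}^k$. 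Since also $\widetilde{\bm{f}} = E_{o,h}^{\otimes d}\bm{f}$ and $E_{o,h}^{\otimes d}$ is linear, the two residuals match as $\widetilde{\bm{r}}^k = E_{o,h}^{\otimes d}\bm{r}^k$.

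Once this is in place, the same argument applied to the iterator finishes the induction. Because $\bm{r}^k$ produces $E_{o,h}^{\otimes d}\bm{r}^k \in \operatorname{range}(E_{o,h}^{\otimes d})$, LFA-compatibility of $(B_h^D, B_h^P)$ gives that $B_h^P E_{o,h}^{\otimes d}\bm{r}^k \in \operatorname{range}(E_{o,h}^{\otimes d})$ and, by the same reasoning as above applied to $B_h^D = R_{o,h}^{\otimes d} B_h^P E_{o,h}^{\otimes d}$, we have $B_h^P E_{o,h}^{\otimes d}\bm{r}^k = E_{o,h}^{\otimes d} B_h^D\bm{r}^k$. Substituting into the periodic update formula in~\eqref{iterative_d} and using the inductive hypothesis yields $\widetilde{\bm{u}}^{k+1} = E_{o,h}^{\otimes d}\bm{u}^k + E_{o,h}^{\otimes d} B_h^D\bm{r}^k = E_{o,h}^{\otimes d}\bm{u}^{k+1}$, closing the induction.

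I do not anticipate a serious obstacle: the entire proof is bookkeeping built on the two LFA-compatibility relations $M_h^D = R_{o,h}^{\otimes d} M_h^P E_{o,h}^{\otimes d}$ and invariance of $\operatorname{range}(E_{o,h}^{\otimes d})$ under $M_h^P$. The only mild subtlety to watch is that one must verify range-invariance at both the $A_h^P$ step (to identify residuals) and the $B_h^P$ step (to identify increments); skipping the range-invariance half of LFA-compatibility at either point would leave one unable to pull an $E_{o,h}^{\otimes d}$ through the operator, so I would state these two uses explicitly rather than bundling them.
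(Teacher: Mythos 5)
Your proposal is correct and follows essentially the same route as the paper: induction on $k$, using the LFA-compatibility of $(A_h^D,A_h^P)$ (Lemma~\ref{Th_relation}) to identify the periodic residual with the odd extension of the Dirichlet residual, and then the LFA-compatibility of $(B_h^D,B_h^P)$, together with the range-invariance and~\eqref{property12}, to pull $E_{o,h}^{\otimes d}$ through the correction step. Your explicit separation of the two range-invariance uses is a slightly more careful bookkeeping of exactly the argument in the paper.
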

\begin{proof}
  We prove the result by showing that if
  $\widetilde{\bm{u}}^k = E_{o,h}^{\otimes d} \bm{u}^k$ then
  $\widetilde{\bm{u}}^{k+1} = E_{o,h}^{\otimes d}
  \bm{u}^{k+1}$. Clearly, from \eqref{iterative_d}, and the fact that
  $\widetilde{\bm{u}}^k = E_{o,h}^{\otimes d} \bm{u}^k$, we have
  $\widetilde{\bm{u}}^{k+1} = E_{o,h}^{\otimes d} \bm{u}^k +
  B_h^P(E_{o,h}^{\otimes d} \bm{f}-A^P_h E_{o,h}^{\otimes d}
  \bm{u}^k)$. Next, we use
  Lemmata~\ref{Th_relation}--\ref{inv_invariance_d} to obtain that,
\[
\widetilde{\bm{u}}^{k+1} = E_{o,h}^{\otimes d}\bm{u}^k + B_h^P(E_{o,h}^{\otimes d} \bm{f}-E_{o,h}^{\otimes d}
R_{o,h}^{\otimes d} A^P_h E_{o,h}^{\otimes d} \bm{u}^k) =  E_{o,h}^{\otimes d}\bm{u}^k + B_h^P E_{o,h}^{\otimes d}(\bm{f}-A^D_h \bm{u}^k).
\]
Since,
$E_{o,h}^{\otimes d}(\bm{f}-A^D_h \bm{u}^k)\in \operatorname{range}
(E_{o,h}^{\otimes d})$, and 
$B_h^P E_{o,h}^{\otimes d}(\bm{f}-A^D_h \bm{u}^k)\in
\operatorname{range} (E_{o,h}^{\otimes d})$ 
we have that 
\[
\widetilde{\bm{u}}^{k+1} = E_{o,h}^{\otimes d} \bm{u}^k + E_{o,h}^{\otimes d} R_{o,h}^{\otimes d} B_h^P E_{o,h}^{\otimes d}(\bm{f}-A^D_h \bm{u}^k).
\]
Finally, we use that $(B_h^D,B_h^P)$ is an LFA-compatible pair to
obtain that
$\widetilde{\bm{u}}^{k+1} = E_{o,h}^{\otimes d} (\bm{u}^k +
B_h^D(\bm{f}- A_h^D \bm{u}^k)) = E_{o,h}^{\otimes d} \bm{u}^{k+1}$
which is what we wanted to show.
\end{proof}

\subsection{Two grid methods}
We now consider the two-grid and multigrid methods. We begin by
defining the coarse grids for the Dirichlet and periodic problems in
one spatial dimension ($d=1$). In a standard fashion, we define
\[
\Omega_{2h}^D = \{x_i = 2ih \ | \ i = 0,\ldots, n/2\},
\quad \mbox{and}\quad \Omega_{2h}^P = \{x_i = 2ih \ | \ i = 0,\ldots, n\}.
\]
We denote by $\mathcal{G}(\Omega_h^{D,P})$,
$\mathcal{G}(\Omega_{2h}^{D,P})$ the subspaces of grid-functions
defined on $\Omega_h^{D,P}$ and $\Omega_{2h}^{D,P}$, respectively.  On
such coarse grid, we also define $A_{2h}^{D,P}$ by~\eqref{system_eqs_dD} but
with $2h$ instead of $h$. The extension to higher spatial dimensions is done using standard tensor products of grids and operators. 

We now consider the two-grid algorithms, which are linear iterative methods already defined in~\eqref{iterative_d} with special iterators  
$B_{TG}=B_{TG}^{D,P}$ as follows:
\begin{equation}\label{two-grid}
B_{TG} = \left(I-(I-I_{2h,h}(A_{2h})^{-1}I_{h,2h}A_h)(I-S_hA_h)\right)(A_h)^{-1}, 
\end{equation}
In~\eqref{two-grid} all operators change depending on whether we
consider Dirichlet or periodic problem, namely, we have $A_h^D$,
$A_h^P$, $I_{2h,h}^D$, $I_{2h,h}^P$, etc. Here, $S_h^{D,P}$ are
relaxation (smoothing) operators,
$I_{h,2h}^{D,P}: \mathcal{G}(\Omega_h^{D,P})\rightarrow
\mathcal{G}(\Omega_{2h}^{D,P})$ are the restriction operators and
$I_{2h,h}^{D,P}:\mathcal{G}(\Omega_{2h}^{D,P}) \rightarrow
\mathcal{G}(\Omega_{h}^{D,P})$ are the prolongation operators.  To prove
the main result, we need to introduce LFA-compatible restriction and
prolongation operators. We say that  
the pairs 
$(I^D_{2h,h},I^P_{2h,h})$ and $(I^D_{h,2h},I^P_{h,2h})$ are \emph{LFA-compatible} if and only if
\begin{eqnarray}\label{h-2h-h-compatible}
I_{h,2h}^D = R_{o,2h} I_{h,2h}^P E_{o,h}, \quad 
I_{h,2h}^P v \in \operatorname{range} (E_{o,2h}), 
\quad\mbox{for all} \quad v \in \operatorname{range} (E_{o,h}),\\
I_{2h,h}^D = R_{o,h} I_{2h,h}^P E_{o,2h}, \quad
I_{2h,h}^P v \in \operatorname{range} (E_{o,h})
\quad\mbox{for all} \quad v \in \operatorname{range} (E_{o,2h}).
\end{eqnarray}

The multigrid iterator is obtained from the two grid by recursion, namely,
\begin{equation}\label{m-grid}
B_{h} = \left(I-(I-I_{2h,h}B_{2h}I_{h,2h}A_h)(I-S_hA_h)\right)(A_h)^{-1}, 
\end{equation}
where $B_{nh}= A_{nh}^{-1}$ for both the Dirichlet and the periodic problem.  

We have the following theorem, showing that the iterations via two grid are related.
\begin{thm}\label{multigrid_eq} 
If $(A_h^D,A_h^P)$, $((A_{2h}^D)^{-1},(A_{2h}^P)^{-1})$, $(S_h^D,S_h^P)$, $(I^D_{2h,h},I^P_{2h,h})$, $(I^D_{h,2h},I^P_{h,2h})$ are LFA compatible,  then $(B_{h}^D,B_{h}^P)$ is
LFA-compatible.
\end{thm}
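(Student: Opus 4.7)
The strategy is to decompose $B_h$ into a finite combination of operators that are already known to form LFA-compatible pairs, and then invoke a closure property of LFA-compatibility under sums and products.

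The first step is to establish a closure lemma: if $(M_1^D, M_1^P)$ maps a space $A$ to a space $B$ and $(M_2^D, M_2^P)$ maps $B$ to $C$, and both are LFA-compatible with respect to the appropriate extension/restriction operators at each level, then the composition $(M_2^D M_1^D,\, M_2^P M_1^P)$ is an LFA-compatible pair from $A$ to $C$. The key identity is $E_B R_B w = w$ for $w \in \operatorname{range}(E_B)$, combined with the hypothesis that $M_1^P v \in \operatorname{range}(E_B)$ whenever $v \in \operatorname{range}(E_A)$. Then
\[
R_C (M_2^P M_1^P) E_A = R_C M_2^P (E_B R_B) M_1^P E_A = (R_C M_2^P E_B)(R_B M_1^P E_A) = M_2^D M_1^D,
\]
and range preservation composes directly. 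The corresponding closure under sums is immediate from linearity, and the identity pair $(I,I)$ is trivially LFA-compatible.

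With this closure lemma in hand, the theorem follows by inspection of the two-grid formula~\eqref{two-grid}: the periodic iterator $B_h^P$ is a polynomial expression in $I$, $A_h^P$, $(A_h^P)^{-1}$, $S_h^P$, $I_{h,2h}^P$, $(A_{2h}^P)^{-1}$, and $I_{2h,h}^P$. Each of these operators forms an LFA-compatible pair with its Dirichlet counterpart: the first six by the hypotheses of the theorem, and $((A_h^D)^{-1},(A_h^P)^{-1})$ by Theorem~\ref{inv_invariance_d}. A finite number of applications of the closure lemma then shows that $(B_h^D, B_h^P)$ is LFA-compatible.

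The point requiring the most care is that the extension/restriction operators change between grid levels, so the intermediate space in the composition lemma is sometimes the fine grid and sometimes the coarse grid. Concretely, the insertion $E_{o,2h} R_{o,2h}$ is used whenever the telescoping runs through the coarse-grid factors $I_{h,2h}^P$, $(A_{2h}^P)^{-1}$, $I_{2h,h}^P$ (appealing to~\eqref{h-2h-h-compatible}), whereas $E_{o,h} R_{o,h}$ is inserted between fine-grid factors. Once this bookkeeping is established, the full telescoping identity $B_h^D = R_{o,h}^{\otimes d} B_h^P E_{o,h}^{\otimes d}$ and the range preservation $B_h^P \operatorname{range}(E_{o,h}^{\otimes d}) \subseteq \operatorname{range}(E_{o,h}^{\otimes d})$ follow in the same spirit as Lemma~\ref{Th_relation} and Theorem~\ref{th_iterdD}, and no additional structural argument is needed.
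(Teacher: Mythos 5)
Your proposal is correct and takes essentially the same route as the paper: the paper's proof inserts $E_{o,h}R_{o,h}$ (respectively $E_{o,2h}R_{o,2h}$ for the coarse-grid factors, via~\eqref{h-2h-h-compatible}) between the factors of the two-grid operator~\eqref{two-grid} and telescopes, identifying each factor $R M^P E$ with its Dirichlet counterpart, which is exactly your composition closure lemma applied to the polynomial expression for $B_{TG}$. The only difference is organizational: you package the telescoping as a general closure property (and correctly invoke Theorem~\ref{inv_invariance_d} for the pair $((A_h^D)^{-1},(A_h^P)^{-1})$), whereas the paper carries out the same insertions inline for $d=1$ and appeals to recursion and tensor-product properties for the general case.
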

\begin{proof}
  We prove this theorem for the case $d=1$ only and $B_h=B_{TG}$ as
  the general case follows from recursive application of this argument
  and the properties of tensor product listed earlier.
\begin{eqnarray*}
R_{o,h} B_{TG}^P E_{o,h} &=& R_{o,h} \left(I-(I-I_{2h,h}^P(A^P_{2h})^{-1}I_{h,2h}^PA_h^P)E_{o,h} R_{o,h} (I-S_h^PA_h^P)\right)E_{o,h} R_{o,h} (A_h^P)^{-1} E_{o,h} \\
& = & \left(I-(I-R_{o,h} I_{2h,h}^P(A^P_{2h})^{-1}I_{h,2h}^PA_h^PE_{o,h})  (I-S_h^DA_h^D)\right)  (A_h^D)^{-1}. 
\end{eqnarray*}
Moreover, because of the invariant properties it follows that
\begin{eqnarray*}
R_{o,h} I_{2h,h}^P(A^P_{2h})^{-1}I_{h,2h}^PA_h^PE_{o,h} = (R_{o,h} I_{2h,h}^P E_{2h}) (R_{2h} (A^P_{2h})^{-1} E_{2h})
(R_{2h} I_{h,2h}^P E_{o,h}) (R_{o,h}  A_h^PE_{o,h}).
\end{eqnarray*}
By using the properties in the assumptions in the theorem we have that $B_{TG}^D = R_{o,h} B_{TG}^P E_{o,h}$.
The invariant property of $B_{TG}^P$ follows from the invariant properties of all the operators involved in the two-grid method. 
\end{proof}
\section{Examples and extensions} 
The compatibility result in Theorem~\ref{multigrid_eq} shows that the
LFA, which is strictly justified for periodic problems, provides rigorous
results also for the Dirichlet problems. Of course, this is for particular choices of $S_h$, $I_{h,2h}$ and the rest of the involved operators. LFA-compatible smoothers include the weighted Jacobi method, the Red-Black Gauss-Seidel, line relaxation methods and polynomial smoothers. The frequently used inter-grid transfer operators full-weighting and bilinear interpolation are LFA-compatible restriction and prolongation operators, respectively. Therefore, multigrid methods based on these components applied to problems with Dirichlet boundary conditions can be analyzed rigorously by LFA. 

Moreover, problems with other boundary conditions can also be put into this framework. For example, all the results presented in this work are easily reproduced for the pure Neumann problem by using an even extension operator instead the odd extension operator. Problems
with mixed boundary conditions can also be included in this framework by using an even extension operator followed by an odd extension operator.

We conclude that for a wide range of multigrid components and for problems with other boundary conditions than the periodic ones, the LFA provides rigorous asymptotic multigrid convergence factors.

\section*{Acknowledgements}

The work of F.~J. Gaspar is supported by the European Union's
Horizon 2020 research and innovation programme under the Marie
Sklodowska-Curie grant agreement NO 705402, POROSOS. The research of
C.~Rodrigo is supported in part by the Spanish project FEDER /MCYT
MTM2016-75139-R and the DGA (Grupo consolidado PDIE). L.~Zikatanov is supported in part by NSF DMS-1522615 and DMS-1720114.



\section*{\refname}
\bibliographystyle{elsarticle-num} 
\bibliography{report_LFA}

%
%
%
%
\end{document}